	\newcommand{\Z}{\mathbb{Z}}
	\newcommand{\PGL}{\rm{PGL}}
	\newcommand{\GL}{\rm{GL}}
	\newcommand{\pz}{\rm{PGL}(2,\Z)}
\newtheorem{theorem}{Theorem}
\newtheorem{proposition}{Proposition}
\theoremstyle{definition}
\begin{document}

\title{Finite Subgroups of the Extended Modular Group 
}
\author{Gregory { Dresden},
Prakriti {Panthi}, 
Anukriti {Shrestha}, Jiahao {Zhang},\\
Washington \& Lee University, Lexington, VA, USA,\\
corresponding author: {\tt dresdeng@wlu.edu} }
\date{} 

\maketitle

\begin{abstract}
We show that in the extended modular group 
$\overline\Gamma = \PGL(2, \mathbb{Z})$
there are exactly seven finite subgroups up to conjugacy;
three subgroups of size 2, one subgroup each of size 3, 4, and 6, and the
trivial subgroup of size 1.\\

\noindent{\bf Key words}: extended modular group, conjugacy class, finite subgroups.
\end{abstract}

\section{Introduction.}

In a recent article \cite{Bea}, Beauville used Galois cohomology to find all finite subgroups (up to conjugacy)
of $\PGL(2, K)$ for certain fields $K$. 
In this paper, we use elementary methods to do the same for 
the group 
$\overline\Gamma = \PGL(2, \Z)$, often called the
{\em extended modular group}. 
 Although this result can be derived from earlier work (both
 Klemm \cite[Satz 7.9]{Kle} and Newman \cite[Chapter IX, \S 14]{New} 
classify the finite  subgroups of 
 $\GL(2,\Z)$, from which we can obtain our result on $\PGL(2,\Z)$),
 we feel it deserves more exposure. Klemm's work was in the context of 
 classifying the wallpaper groups, and Newman used theory from linear algebra. Our paper, in contrast,
 calls upon two recent results (one on free groups using a theorem of Kurosh, the other on symmetry groups) to do the ``heavy lifting", and we 
 combine them to get our main result on $\PGL(2,\Z)$ using just basic algebra and direct computation. 

We define the group 
$\overline\Gamma = \PGL(2, \Z) = \rm{GL}(2, \Z)/{\{\pm I\}}$
as the set of all 
matrices 
$ \left( 
\begin{array}{cc} a & b \\ c & d \\
\end{array} \right)
$ 
with integer coefficients $a,b,c,d$ and $ad-bc = \pm1$ with the understanding
that 
$ \left( 
\begin{array}{cc} a & b \\ c & d \\
\end{array} \right)
=
 \left( 
\begin{array}{cc} -a & -b \\ -c & -d \\
\end{array} \right)
$.
One nice feature of this group $\overline\Gamma$ is that there is an isomorphism from
 $\overline\Gamma$ to a group of functions (called {\em linear fractional transforms}) under composition, as follows:
\[
 \left( 
\begin{array}{cc} a & b \\ c & d \\
\end{array} \right)
\ \  \longmapsto \ \ 
\frac{ ax+b}{cx+d}
\]
Thanks to this isomorphism, we can re-write the product of matrices 
as a composition of linear fractional transforms, and vice-versa. 

In what follows, we will use
matrix notation and function notation interchangeably. For example, we will
use 
the matrix 
$ \left( 
\begin{array}{cc} 0 & 1 \\ 1 & 0 \\
\end{array} \right)
$,
the matrix 
$ \left( 
\begin{array}{cc} 0 & -1 \\ -1 & 0 \\
\end{array} \right)
$,
and the corresponding function $\displaystyle \frac{1}{x}$ to refer to the same object in $\overline\Gamma = \PGL(2, \Z)$
as convenient. Because of this correspondence, we can define  
the ``determinant" of the function $(ax+b)/(cx+d)$ to be
the determinant $ad-bc$ of the corresponding matrix 
$ \left( 
\begin{array}{cc} a & b \\ c & d \\
\end{array} \right)
$.
For more on the extended modular group $\overline\Gamma$, 
see for example
\cite{JT, Kul, SIK, Sin}.

\section{Statement of Main Result.} 

To find all 
finite subgroups of $\overline\Gamma = \PGL(2, \mathbb{Z})$ up to conjugacy,
we need to carefully stitch together two previous results from 2003 and 2004.

This first theorem, by Y{\i}lmaz \"Ozg\"ur and {\c S}ahin \cite[Theorem 2.3]{OS}, comes from considering the  presentation of
$\overline\Gamma$ as a free group with three generators, and it gives us
{\em elements} in $\overline\Gamma$ of finite order, up to {\em conjugacy}. 

\begin{theorem}[Y{\i}lmaz \"Ozg\"ur, {\c S}ahin] \label{ozg} 
There are exactly four conjugacy classes for non-trivial elements of finite order in
$\overline\Gamma$. Every element of order two is 
conjugate to either $1/x$ or $-x$ or $-1/x$, 
and  every element of order three is conjugate to 
$-1/(x+1)$.
\end{theorem}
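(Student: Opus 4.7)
The plan is to exploit the structure of $\overline\Gamma$ as an amalgamated free product of finite groups and then apply a Kurosh-type theorem to reduce the classification to a finite check. It is classical that $\overline\Gamma$ admits a decomposition of the form $\overline\Gamma \cong D_2 \ast_{C_2} S_3$, with explicit generators realized as linear fractional transformations. My first step would be to exhibit such a decomposition concretely, choosing involutions and an order-$3$ element of $\overline\Gamma$ that generate the two finite factors; for instance $1/x$, $-x$, and $-1/x$ are natural candidates for involutions, and $-1/(x+1)$ is a natural order-$3$ generator.

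Next I would invoke the general principle (a consequence of Kurosh's theorem for free products, and more precisely the Karrass--Solitar theorem for amalgamated products of finite groups, viewable through Bass--Serre theory) that every finite-order element of such a group is conjugate into one of the two finite vertex groups. This immediately reduces the classification of finite-order conjugacy classes in $\overline\Gamma$ to inspecting the non-identity elements of $D_2$ and $S_3$: three involutions in $D_2$ and three involutions plus two order-$3$ elements in $S_3$. Identifying these with explicit linear fractional transformations would recover exactly the four representatives claimed in the theorem once internal conjugacies of the factors are accounted for (in particular, the two order-$3$ elements of $S_3$ are mutually inverse and hence conjugate by any involution in $S_3$, collapsing to one class represented by $-1/(x+1)$).

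The final step is to verify that the four listed representatives are pairwise non-conjugate in $\overline\Gamma$. The determinant is well defined on $\PGL(2,\Z)$ as an element of $\{\pm 1\}$ and is invariant under conjugation; it separates $-1/x$ (determinant $+1$) from $1/x$ and $-x$ (both determinant $-1$), and the order-$3$ class (determinant $+1$) is of course distinct by order. To separate $1/x$ from $-x$, I would argue via fixed points: a conjugator $g \in \overline\Gamma$ would have to map $\{0,\infty\}$ to $\{1,-1\}$, but imposing this on $g=\tfrac{ax+b}{cx+d}$ with $ad-bc=\pm 1$ quickly forces the diophantine equation $-2cd=\pm 1$, which has no integer solutions.

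The main obstacle I expect is the first step: setting up the amalgamated free product decomposition of $\overline\Gamma$ cleanly and invoking (or proving) the Bass--Serre/Karrass--Solitar fact that finite-order elements must lie in a conjugate of a vertex group. This is where the ``theorem of Kurosh'' alluded to in the introduction does the real work; once it is in place, everything else reduces to routine enumeration in $D_2$ and $S_3$ and the short fixed-point argument above.
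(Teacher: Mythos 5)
The paper does not actually prove Theorem~\ref{ozg}; it is imported from Y{\i}lmaz \"Ozg\"ur and {\c S}ahin \cite{OS}, and the method the paper attributes to that source --- the presentation of $\overline\Gamma$ as a (amalgamated) free product of finite groups plus a Kurosh-type theorem forcing finite-order elements into conjugates of the vertex groups --- is exactly the route you sketch. Your outline is sound: the decomposition $\overline\Gamma \cong D_2 \ast_{C_2} D_3$ (concretely, $G_4$ and $G_6$ amalgamated over $\langle 1/x\rangle$) is classical, the Bass--Serre/Karrass--Solitar step reduces the classification to the six nontrivial vertex-group elements, and your determinant and fixed-point computations correctly show the four listed representatives are pairwise non-conjugate.
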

This second result, derived from a paper by Dresden \cite{Dre1}, 
comes from considering the finite symmetry groups of the sphere, and it gives us
{\em subgroups} in $\overline\Gamma$ of finite order, up to {\em isomorphism}. 

\begin{theorem}\label{dre} 
There are exactly four  isomorphism classes for non-trivial subgroups of finite 
order in
$\overline\Gamma$. Every such subgroup is isomorphic to either one of the
 cyclic groups $C_2, C_3$, or one of the dihedral
groups $D_2, D_3$, of 
sizes 2, 3, 4, and 6 respectively. \end{theorem}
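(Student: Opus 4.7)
The plan is to combine two ingredients: a structural input extracted from Dresden's paper \cite{Dre1} forcing every finite subgroup of $\overline\Gamma$ to be cyclic or dihedral, together with Theorem~\ref{ozg}, which caps the orders of finite-order elements at $2$ or $3$.

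For the structural step, I would first observe that the natural map $\GL(2,\Z)\to\GL(2,\mathbb{R})\to\PGL(2,\mathbb{R})$ has kernel $\{\pm I\}$ (the only integer scalar matrices of determinant $\pm 1$), so $\overline\Gamma$ injects into $\PGL(2,\mathbb{R})$. The group $\PGL(2,\mathbb{R})$ acts by isometries on the hyperbolic upper half-plane $\mathbb{H}^2$: determinant-$+1$ elements act by ordinary linear fractional transforms, while determinant-$-1$ elements act by the orientation-reversing Möbius transformations $z\mapsto (a\bar z+b)/(c\bar z+d)$. Any finite group of isometries of a CAT(0) space fixes a point (Cartan's fixed-point theorem, applied to the barycenter of any orbit), and the stabilizer of a point in $\mathbb{H}^2$ is compact, conjugate to $\mathrm{O}(2)$. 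Since the finite subgroups of $\mathrm{O}(2)$ are exactly the cyclic groups $C_n$ and the dihedral groups $D_n$ ($n\ge 1$), the same holds for finite subgroups of $\overline\Gamma$. This is the essential content of \cite{Dre1} for our purposes.

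With the cyclic-or-dihedral dichotomy in hand, Theorem~\ref{ozg} trims the list. Since every non-identity element of finite order in $\overline\Gamma$ has order $2$ or $3$, a cyclic subgroup $C_n$ forces $n\in\{1,2,3\}$; a dihedral subgroup $D_n$ of order $2n$ contains $C_n$, so again $n\le 3$, and $D_1\cong C_2$ merely duplicates a cyclic case. The only non-trivial isomorphism classes left are therefore $C_2, C_3, D_2, D_3$, of orders $2,3,4,6$.

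To see that each of these four types actually occurs, I would write down explicit witnesses: $\{x,\,1/x\}\cong C_2$; $\{x,\,-1/(x+1),\,-(x+1)/x\}\cong C_3$; $\{x,\,1/x,\,-x,\,-1/x\}\cong D_2$; and the order-$6$ subgroup generated by $-1/(x+1)$ and $1/x$, which is dihedral because $1/x$ conjugates $-1/(x+1)$ to its inverse $-(x+1)/x$. The main obstacle is the first step: isolating the right statement to extract from \cite{Dre1}. If Dresden's paper enumerates finite symmetry groups of the sphere in full generality (where $A_4$, $S_4$, and $A_5$ would also appear), one still needs the $\mathbb{H}^2$ fixed-point argument above to rule those three out when working over $\mathbb{R}$ rather than $\mathbb{C}$. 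Once the ``cyclic or dihedral'' dichotomy is secured, everything else is routine verification.
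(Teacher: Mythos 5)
Your argument is correct, but the ``heavy lifting'' step is done by a genuinely different route than the paper's. The paper simply cites Theorem 1 of \cite{Dre1}, which (via the finite symmetry groups of the sphere, i.e., finite subgroups of $\PGL(2,\mathbb{C})$) classifies the finite subgroups of the group of integer-coefficient fractional linear transforms with non-zero determinant as $C_1, C_2, C_3, C_4, C_6, D_2, D_3, D_4, D_6$; Theorem \ref{ozg} then kills $C_4, C_6, D_4, D_6$ because $\pz$ has no elements of order $4$ or $6$, and realizability is deferred to the explicit groups in Theorem \ref{main}. You instead prove the structural input from scratch: embed $\overline\Gamma$ into $\PGL(2,\mathbb{R})$ acting as the full isometry group of the hyperbolic plane, invoke the Cartan/Bruhat--Tits fixed-point theorem to put any finite subgroup inside a point stabilizer isomorphic to $\mathrm{O}(2)$, and conclude that every finite subgroup is cyclic or dihedral. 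This is sound (minor quibble: the standard CAT(0) argument uses the circumcenter of an orbit rather than a barycenter, though Cartan's original center-of-mass argument also works), and it buys something real: you get the clean dichotomy ``cyclic or dihedral'' over all of $\PGL(2,\mathbb{R})$ without any detour through $\PGL(2,\mathbb{C})$, so the exceptional groups $A_4, S_4, A_5$ never even appear and no crystallographic restriction is needed --- Theorem \ref{ozg} alone trims $n$ to at most $3$. The trade-off is that you import hyperbolic geometry and a fixed-point theorem, whereas the paper's stated aim is to stay elementary and lean on the two cited results; your closing worry about needing to rule out $A_4, S_4, A_5$ ``over $\mathbb{R}$'' is in fact already handled inside \cite{Dre1}, whose Theorem 1 delivers exactly the nine-group list the paper quotes. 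Your explicit witnesses for $C_2, C_3, D_2, D_3$ (including the check that $1/x$ conjugates $-1/(x+1)$ to its inverse) match the groups $G_3, G_4, G_6$ the paper points to and complete the realizability step correctly.
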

(We will prove this theorem in a moment.) 
We will be able to combine these two theorems to prove our main result, which 
we state here.

\begin{theorem}\label{main}
Any finite non-trivial subgroup of $\overline\Gamma = \PGL(2, \mathbb{Z})$ is of size
two, three, four, or six. 
The groups of size two are conjugate in $\overline\Gamma$ 
to either $\{ x, -x \}$ or 
$\{ x, 1/x \}$  or
$\{ x, -1/x \}$.
All groups of size three in $\overline\Gamma = \PGL(2, \mathbb{Z})$ 
are conjugate 
in $\overline\Gamma$ to 
\[
G_3 = \left\{ x, \ \frac{-1}{x+1},\  \frac{-x-1}{x} \right\}.
\]
Likewise, all groups of size four are conjugate to 
\[
G_4 = \left\{ x,\  \frac{1}{x}, \  -x, \  \frac{-1}{x} \right\}.
\]
and all groups of size six are conjugate to
\[
G_6 = \left\{ x, \ \frac{-1}{x+1},\  \frac{-x-1}{x},
\ \  \frac{1}{x},\  \frac{-x}{x+1}, \ 
-x-1 \right\}.
\]
\end{theorem}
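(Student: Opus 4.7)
The plan is to use Theorem \ref{dre} to reduce to four cases indexed by the isomorphism type of $H$: cyclic of order $2$ or $3$, or dihedral of order $4$ or $6$. In each case I will normalize a carefully chosen element of $H$ by conjugation (via Theorem \ref{ozg}), then pin down the remaining elements by a direct matrix calculation. The cyclic cases are essentially immediate: a generator of $H$ is conjugate in $\overline{\Gamma}$ to one of the representative elements of Theorem \ref{ozg}, so the same conjugation sends $H$ to the corresponding listed subgroup (for $|H|=3$, one squares $-1/(x+1)$ to confirm the cyclic group it generates is $G_3$).

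For the size-$4$ case, $H \cong C_2 \times C_2$. The key observation is that the determinant is well-defined on $\overline{\Gamma}$ (because $\det(-A)=\det(A)$ in dimension $2$) and gives a homomorphism $\det : \overline{\Gamma} \to \{\pm 1\}$. Restricted to $H$, the kernel has order at least $2$, so some involution $a \in H$ has determinant $+1$. By Theorem \ref{ozg}, $a$ is then conjugate to $-1/x$, the only order-$2$ class of determinant $+1$. After conjugating $H$, I may assume $-1/x \in H$. It remains to show the centralizer of $-1/x$ in $\overline{\Gamma}$ equals $G_4$: setting up $AS = \pm SA$ for $A = \begin{pmatrix} 0 & -1 \\ 1 & 0 \end{pmatrix}$ and an integer matrix $S$ reduces to the sum-of-two-squares constraint $p^2 + r^2 = 1$, whose four integer solutions correspond exactly to the four elements of $G_4$. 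Since $H$ lies in the centralizer and both have order four, $H = G_4$.

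For the size-$6$ case, $H \cong D_3$ contains a unique order-$3$ subgroup, generated by some element $r$. By Theorem \ref{ozg} I conjugate so that $r = -1/(x+1)$ and $\langle r \rangle = G_3$. Any involution $s \in H$ must satisfy the dihedral relation $srs = r^{-1}$, which in matrix form $sA = \pm A^{-1} s$ (for $A = \begin{pmatrix} 0 & -1 \\ 1 & 1 \end{pmatrix}$) produces a linear system forcing $s = \begin{pmatrix} a & b \\ b-a & -a \end{pmatrix}$, and then $\det s = \pm 1$ reduces to the Diophantine equation $a^2 - ab + b^2 = 1$. The left-hand side is positive definite, so there are exactly six integer solutions, yielding $\pm$ the three matrices representing the involutions $1/x$, $-x-1$, and $-x/(x+1)$ of $G_6$. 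Each such $s$, together with $r$, generates all of $G_6$, so $H = G_6$. The main obstacle throughout is tracking the $\pm$ sign ambiguity inherent in $\PGL(2,\Z)$: it doubles both the centralizer analysis and the dihedral-relation analysis, and one must verify in each case that the enlarged solution sets still correspond exactly to the canonical groups $G_4$ and $G_6$.
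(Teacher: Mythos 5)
Your proposal is correct and follows essentially the same route as the paper: reduce to the four isomorphism types via Theorem \ref{dre}, use the determinant homomorphism to locate an involution conjugate to $-1/x$ in the order-$4$ case and the order-$3$ element in the order-$6$ case, normalize by Theorem \ref{ozg}, and finish with the same integer matrix computations (your centralizer equation $p^2+r^2=1$ and dihedral-relation equation $a^2-ab+b^2=1$ are exactly the quadratic forms solved in the paper's Proposition \ref{single4}). The only difference is organizational, since the paper packages those two computations as a separate proposition.
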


\section{Proofs.}

\begin{proof}[Proof of Theorem \ref{dre}]
Thanks to the isomorphism mentioned earlier, 
we can think of  $\overline\Gamma = \PGL(2, \mathbb{Z})$ 
as the group of 
linear fractional transforms $(ax+b)/(cx+d)$ with integer coefficients
and determinant $ad - bc = \pm1$. 
This group $\overline\Gamma$ sits inside the larger group 
of such linear fractional transforms with {\em non-zero} determinant
$ad-bc$, and 
we call upon Theorem 1 of \cite{Dre1}
to see that all non-trivial finite subgroups
of this larger group (and hence of our group $\pz$) 
are isomorphic to either $C_2$, $C_3$, $C_4$, $C_6$,
$D_2$, $D_3$, $D_4$, or $D_6$. By Theorem \ref{ozg} of this paper we see that
$\PGL(2, \Z)$ does not have elements of order 4 or 6, thus eliminating
from consideration the groups $C_4$, $C_6$, $D_4$, and $D_6$. 
It remains to show that the other finite groups are realizable in 
$\pz$, but this follows from the explicit examples given in the statement of our
Theorem \ref{main}.
%
\end{proof}

The following proposition is essential to our proof of Theorem \ref{main}, 
and will allow us to 
combine together our Theorems \ref{ozg} and \ref{dre}, above. 
\begin{proposition}\label{single4}
If a subgroup of  $\overline\Gamma = \PGL(2, \mathbb{Z})$ 
contains $-1/x$ and is of size 4, then it must equal $G_4$.
Likewise, if a subgroup of  $\overline\Gamma = \PGL(2, \mathbb{Z})$ 
contains $-1/(x+1)$ and is of size 6, then it must equal $G_6$.
\end{proposition}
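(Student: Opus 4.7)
The strategy is to first use Theorems \ref{ozg} and \ref{dre} to fix the abstract isomorphism type of the subgroup, and then run a brief direct computation in $\overline\Gamma$ to narrow down the possible elements.

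For the size-4 statement, Theorems \ref{dre} and \ref{ozg} together force the subgroup to be isomorphic to $D_2 \cong C_2 \times C_2$ (not $C_4$, since $\overline\Gamma$ has no elements of order 4). Being abelian, every element must commute with $-1/x$, so it suffices to enumerate all order-2 elements of $\overline\Gamma$ commuting with $-1/x$. Writing such an element in the trace-zero form $(ax+b)/(cx-a)$ with $-a^2 - bc = \pm 1$, the commutation relation together with the determinant constraint admits only finitely many integer solutions, and a short case analysis produces exactly the three elements $1/x$, $-x$, and $-1/x$. Together with the identity, they form $G_4$.

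For the size-6 statement, Theorems \ref{dre} and \ref{ozg} analogously force the subgroup to be $D_3$. With $r = -1/(x+1)$ of order 3 and $r^2 = (-x-1)/x$, the remaining three elements are order-2 ``reflections'' $s$ satisfying the dihedral relation $srs = r^{-1}$, equivalently $(sr)^2 = e$. I would parametrize $s = (\alpha x + \beta)/(\gamma x - \alpha)$ as a trace-zero element; the condition that $sr$ also have trace zero then yields $\beta = \alpha + \gamma$ after a short matrix multiplication. Substituting into the determinant relation collapses everything to the positive-definite norm equation $\alpha^2 + \alpha\gamma + \gamma^2 = 1$ in integers. A discriminant bound (viewing the equation as quadratic in $\gamma$) forces $|\alpha|, |\gamma| \leq 1$, leaving exactly three candidates for $s$: $1/x$, $-x-1$, and $-x/(x+1)$. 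Each lies in $G_6$ and, together with $r$, generates $G_6$.

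The main obstacle is setting up the size-6 enumeration so that the Diophantine problem becomes genuinely small: the right parametrization of $s$ is needed to translate the dihedral relation into a single linear condition and then reduce the whole problem to a positive-definite binary quadratic form. The size-4 case is strictly easier, because the abelian structure of $D_2$ replaces the dihedral relation with a direct commutation equation.
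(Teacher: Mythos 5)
Your plan is correct and follows essentially the same route as the paper: parametrize the unknown order-2 element in trace-zero form $(ax+b)/(cx-a)$, impose the abelian relation for the $D_2$ case and the dihedral relation $srs=r^{-1}$ for the $D_3$ case, and reduce to the Diophantine condition $a^2+ac+c^2=\pm 1$ whose only solutions give $1/x$, $-x-1$, $-x/(x+1)$ (and $1/x$, $-x$ in the size-4 case). Your phrasing of the $D_3$ relation via ``$sr$ has trace zero'' is a minor cosmetic variant of the paper's direct matrix identity $m^2p=\pm pm$, but the computation is the same.
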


\begin{proof} 
We begin by noting that any element in 
$\pz$ of order 2 must have matrix form
$ \left( 
\begin{array}{cc} a & b \\ c & -a \\
\end{array} \right)
$.
This is easy to see if we note that 
\[
\left( 
\begin{array}{cc} a & b \\ c & d \\
\end{array} \right)\cdot
\left( 
\begin{array}{cc} a & b \\ c & d \\
\end{array} \right)   =    
\left( 
\begin{array}{cc} a^2 + bc & b(a+d) \\ c(a+d) & d^2+bc \\
\end{array} \right),
\]
and for this to equal the identity in $\pz$
either we have $d = -a$ as desired, or we have $b=c=0$ (which
forces $a^2$ and $d^2$ to be 1, and combined with our
matrix being of order 2 and not order 1, this forces
$d = -a$ as desired). We also recall that our definition of $\pz$ 
requires that $a^2 + bc = \pm 1$. 

We turn now to subgroups of $\pz$ of size 4. 
Let  $F_4$ be such a subgroup, and suppose 
$F_4$ contains $-1/x$. We know 
from Theorem \ref{dre} that $F_4$ is dihedral, 
so it contains another element (call it $p(x)$) also of 
order 2. From our discussion above, we 
can write $p(x)$ in function form as $p(x) = (ax+b)/(cx-a)$ with 
$a^2+bc = \pm 1$, and since
a dihedral group of size 4 is abelian then 
$p(-1/x) = -1/p(x)$. In matrix form, this becomes 
$ \left( 
\begin{array}{cc} 
	b & -a \\ 
	-a & -c 
\end{array} \right) = 
 \pm \left( 
\begin{array}{cc} 
	c & -a \\ 
	-a & -b 
\end{array} \right)$,
and by examining the various cases (and recalling that $a^2 + bc = \pm 1$)
we conclude that either $p(x) = -x$ or $p(x) = 1/x$, thus giving us $F_4$
equal to our group $G_4$. 

Finally, we consider 
a subgroup (call it $F_6$) of size 6 in $\pz$ which contains
$m(x) = -1/(x+1)$ and thus also $m^2(x) = (-x-1)/x$.
From Theorem \ref{dre} we know $F_6$ is dihedral, 
so it contains another element (call it $p(x)$) also of 
order 2 such that 
$m^2(p(x)) = p(m(x))$. As seen earlier, we 
can write $p(x) = (ax+b)/(cx-a)$ with 
$a^2+bc = \pm 1$, and
our equality 
$m^2(p(x)) = p(m(x))$
in matrix form becomes
$ \left( 
\begin{array}{cc} 
	-a-c & a-b \\ 
	a & b 
\end{array} \right) = 
 \pm \left( 
\begin{array}{cc} 
	b & b-a \\ 
	-a & -a-c 
\end{array} \right)$.
If we first consider the ``$+$" in the ``$\pm$" above, we quickly arrive at
$a=0$ and thus $b=c=0$, a contradiction. If we now
consider the  ``$-$" in the ``$\pm$" above, we get $b=a+c$, and substituting
this into $a^2 + bc = \pm 1$ gives us $a^2 + ac + c^2 = \pm 1$. By looking at the 
possible values of $a$ and $c$ (namely, $-1, 0$, and $1$) we arrive at
$p(x) = -x-1$ or $p(x) = 1/x$ or $p(x) = -1/(x+1)$, and so our 
group $F_6$ equals $G_6$ as desired.
\end{proof}

It is now an easy matter to prove our main result.

\begin{proof}[Proof of Theorem \ref{main}]
The case for groups of size two and three follows immediately from Theorem \ref{ozg}. 
If $G$ is a group of size 6, then by Theorem \ref{dre}
it is dihedral with an element of order three; by Theorem \ref{ozg} we
can conjugate it
to get a new group $G'$ containing
$-1/(x+1)$, and by Proposition \ref{single4} this new group $G'$ must equal $G_6$. 

The remaining case where 
 $G$ is 
a group 
of size four is a bit more challenging.  
By Theorem \ref{dre} we know $G$ is dihedral with three elements of order two;
we can thus write $G = \{I, A, B, AB\}$ for $A, B$, and $AB$ matrices with
determinants $\pm 1$. At least one 
of these three matrices in $G$ must have determinant
$1$. Now, determinants are preserved under conjugacy in $\pz$, and 
by Theorem \ref{ozg} anything of order 2 in $\pz$ must be 
conjugate
to either $1/x$ (with associated determinant $-1$)
or $-x$ (with associated determinant $-1$) or $-1/x$ (with 
associated determinant $1$). 
Thus, our group $G$ must have an element conjugate to $-1/x$, and 
so we can conjugate our group $G$ to get a new group $G'$ containing
$-1/x$ and then apply Proposition \ref{single4} to state that this new group $G'$ must equal $G_4$. 
\end{proof}

\end{document}